\definecolor{bluecite}{HTML}{0875b7}
\definecolor{Aranka}{RGB}{0, 135, 85}
\newcommandx{\remg}[2][1=]{\todo[linecolor=green,backgroundcolor=green!25,bordercolor=green,#1]{#2}}
\newcommandx{\remb}[2][1=]{\todo[linecolor=blue,backgroundcolor=blue!25,bordercolor=blue,#1]{#2}}
\newcommandx{\remr}[2][1=]{\todo[linecolor=purple,backgroundcolor=purple!25,bordercolor=purple,#1]{#2}}
\newtheorem{proposition}{Proposition}[section]
\newtheorem{theorem}{Theorem}[section]
\numberwithin{equation}{section}
\newcommand{\abs}[1]{\left|#1\right|}
\newcommand{\fel}{\frac{1}{2}}
\newcommand{\dz}{\mathrm{d}z}
\newcommand{\be}{\begin{equation}}
\newcommand{\ee}{\end{equation}}
\newcommand{\dl}{\mathrm{d}\lambda}
\newcommand\C{\mathbb C}
\newcommand\R{\mathbb R}
\newcommand\N{\mathbb N}
\newcommand\Z{\mathbb Z}
\newcommand\M{\mathbb M}
\newcommand{\T}{\mathbb{T}}
\newcommand{\Sn}{\mathbb{S}^n}
\newcommand{\Rn}{\mathbb{R}^{n+1}}
\newcommand{\id}{\mathrm{id}}
\newcommand{\Wpsn}{\mathcal{W}_p\left(\mathbb{S}^n,\|\cdot\|\right)}
\newcommand{\Wtsn}{\mathcal{W}_2\left(\mathbb{S}^n,\|\cdot\|\right)}
\newcommand\W{\mathcal W}
\newcommand\Wp{\mathcal{W}_p}
\newcommand\Wt{\mathcal{W}_2}
\newcommand\Wo{\mathcal{W}_1}
\newcommand\dwpp{d_{\W_p}^p}
\newcommand\dwt{d_{\Wt}}
\newcommand{\lers}[1]{\left\{ #1 \right\}}
\newcommand{\diam}{\mathrm{diam}}
\newcommand{\Wtsr}{\Wt\left(\Sn,\|\cdot\|\right)}
\newcommand{\ler}[1]{\left( #1 \right)}
\newcommand{\dd}{\mathrm{d}}
\newcommand{\inner}[2]{\left< #1,#2 \right>}
\newcommand{\potmu}{\mathcal{T}_\mu^{(p)}}
\newcommand{\potnu}{\mathcal{T}_\nu^p}
\newcommand{\supp}{\mathrm{supp}}
\newcommand{\affspan}{\mathrm{affspan}}
\title[Isometric rigidity of $\Wpsn$]{Isometric rigidity of Wasserstein spaces over Euclidean spheres}
\author[Gy.P. Geh\'er]{Gy\"orgy P\'al Geh\'er}
\address{Gy\"orgy P\'al Geh\'er, Department of Mathematics and Statistics\\ University of Reading\\ Whiteknights\\ P.O.
Box 220\\ Reading RG6 6AX\\ United Kingdom}
\email{gehergyuri@gmail.com}
\author[A. Hru\v{s}kov\'a]{Aranka Hru\v{s}kov\'a}
\address{Aranka Hru\v{s}kov\'a, HUN-REN Alfr\'ed R\'enyi Institute of Mathematics\\ Re\'altanoda u. 13-15.\\
Budapest 1053\\ Hungary\\ and Central European University, Nádor u. 9, Budapest 1051, Hungary}
\email{umim.cist@gmail.com, aranka@berkeley.edu}
\author[T. Titkos]{Tam\'as Titkos}
\address{Tam\'as Titkos, Corvinus University of Budapest, Department of Mathematics \\
Fővám tér 13-15 \\ Budapest 1093 \\ Hungary\\ and HUN-REN Alfr\'ed R\'enyi Institute of Mathematics\\ Re\'altanoda u. 13-15.\\
Budapest 1053\\ Hungary}
\email{titkos@renyi.hu}
\author[D. Virosztek]{D\'aniel Virosztek}
\address{D\'aniel Virosztek, HUN-REN Alfr\'ed R\'enyi Institute of Mathematics\\ Re\'altanoda u. 13-15.\\
Budapest 1053\\ Hungary}
\email{virosztek.daniel@renyi.hu}
\begin{document}
\subjclass{Primary: 54E40; 46E27  Secondary: 60A10; 60B05}

\keywords{Wasserstein space, isometric rigidity}

\thanks{Geh\'er was supported by the Leverhulme Trust Early Career Fellowship (ECF-2018-125), and also by the Hungarian National Research, Development and Innovation
Office (Grant no. K115383); Hrušková was supported by the DYNASNET ERC Synergy grant, agreement ID 810115 and by the NKFIH KKP 139502 project; Titkos was supported by the Hungarian National Research, Development and Innovation Office - NKFIH (grant no. K115383) and by the Momentum Program of the Hungarian Academy of Sciences (grant no. LP2021-15/2021); Virosztek was supported by the Momentum Program of the Hungarian Academy of Sciences (grant no. LP2021-15/2021)
and partially supported by the ERC Consolidator Grant no. 772466.}

\maketitle

\begin{abstract} 
We study the structure of isometries of the quadratic Wasserstein space $\Wtsn$ over the sphere endowed with the distance inherited from the norm of $\mathbb{R}^{n+1}$. We prove that $\Wtsn$ is isometrically rigid, meaning that its isometry group is isomorphic to that of $\ler{\Sn,\|\cdot\|}$. This is in striking contrast to the non-rigidity of its ambient space $\W_2\ler{\Rn,\|\cdot\|}$ but in line with the rigidity of the geodesic space $\W_2\ler{\Sn,\sphericalangle}$. One of the key steps of the proof is the use of mean squared error functions to mimic displacement interpolation in $\Wtsn$. A major difficulty in proving rigidity for quadratic Wasserstein spaces is that one cannot use the Wasserstein potential technique. To illustrate its general power, we use it to prove isometric rigidity of $\W_p\left(\mathbb{S}^1, \|\cdot\|\right)$ for $1 \leq p<2$.
\end{abstract}

\tableofcontents

\section{Motivation and main result}

In recent years, there has been considerable activity in characterising isometries of various metric spaces of probability measures. See e.g. \cite{btv1,btv2,bertrand-kloeckner-2016,DolinarKuzma,DolinarMolnar,Kuiper,KS-Molnar,Levy,LP,JMAA,MolnarSzokol,TAMS,HIL,TnSn,exotic,RIMS,Kloeckner-2010,S-R,titkoskissgraf,Virosztek} for results about the total variation, L\'evy, Kuiper, L\'evy-Prokhorov, Kolmogorov-Smirnov, and Wasserstein metrics. Among these, an interesting result is due to Kloeckner. In \cite[Theorem 1.1 and Theorem 1.2]{Kloeckner-2010}, he shows that the quadratic Wasserstein space $\Wt\ler{\Rn,\|\cdot\|}$, where $\|\cdot\|$ stands for the metric induced by the norm, exhibits the rare phenomenon of not being isometrically rigid, meaning that not all isometries of $\Wt\ler{\Rn,\|\cdot\|}$ are induced by an isometry of $\ler{\Rn,\|\cdot\|}$. In this paper, we consider the metric subspace $\ler{\mathbb{S}^{n},\|\cdot\|}$ of the base space $\ler{\Rn,\|\cdot\|}$ and prove that the non-rigidity does not carry over: the exotic isometries of $\Wt\ler{\Rn,\|\cdot\|}$ send measures supported on $\mathbb{S}^{n}$ to measures supported also outside of $\mathbb{S}^{n}$, while we gain no new exotic isometries by restricting to this smaller metric space. In general, when $H$ is an arbitrary Borel subset of $\R^{n+1}$, then $\Wp(H,\|\cdot\|)$ embeds isometrically into $\Wp\left(\R^{n+1},\|\cdot\|\right)$, but this does not necessarily imply that there exists such a natural embedding for their isometry groups. To see an example, we mention the case of the real line $(\R,|\cdot|)$ with the subset $H=[0,1]$ (see \cite[Theorem 2.5 and Theorem 3.7]{TAMS} for details): the isometry group of $\Wo([0,1],|\cdot|)$ is the Klein group, which cannot be embedded by a group homomorphism into the isometry group of $\mathcal{W}_1(\R,|\cdot|)$, which is isomorphic to the isometry group of the real line.

Finally, we draw attention to Santos-Rodr\'iguez's paper \cite{S-R} and our recent work \cite{TnSn}. In \cite{S-R}, the author considers (among others) Wasserstein spaces with $p>1$ whose underlying metric space is a rank-one symmetric space, which class contains the sphere $\Sn$ with the spherical distance $\sphericalangle$, while in \cite{TnSn}, we considered finite-dimensional tori and spheres with their geodesic distances for all parameters $p \geq 1$. Together, these two papers show that $\Wp\ler{\Sn,\sphericalangle}$ is isometrically rigid for all $p\geq1$. As explained above, in this paper, we replace the angular distance $\sphericalangle$ with another natural metric: the distance inherited from the norm of $\mathbb{R}^{n+1}$. We focus on the case of $p=2$ because this is the only parameter value for which the ambient space $\W_p\ler{\Rn, \|\cdot\|}$ is \emph{not} rigid. We expect that for $p \neq 2$, techniques similar to the ones used in \cite{HIL} would lead to a proof of isometric rigidity. The situation is analogous to the case of the real line and the unit interval: the quadratic Wasserstein space is not rigid over $\R$ but it is rigid over the compact subset $[0,1]$, see \cite[Theorem 1.1]{Kloeckner-2010} and \cite[Theorem 2.6]{TAMS}. Our main result reads as follows.

\begin{theorem}\label{thm:main}
For all $n\in\N$, the quadratic Wasserstein space $\Wtsr$ is isometrically rigid. That is, for any isometry $\Psi\colon\Wtsr\to\Wtsr$, there exists an isometry $\psi\colon\Sn\to\Sn$ such that $\Psi = \psi_\#$.
\end{theorem}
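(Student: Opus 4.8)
My plan is to follow the classical three-step scheme for Wasserstein rigidity, using the mean squared error function as the principal new tool in place of the (here unavailable) potential technique and of displacement interpolation.

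\emph{Step 1: isolating the Dirac masses metrically.} Since $\diam\ler{\Sn,\|\cdot\|}=2$ and $\|x-y\|^2=2-2\inner{x}{y}$ for $x,y\in\Sn$, a short computation gives $\dwt(\mu,\nu)^2=2-2\sup_\pi\int\inner{x}{y}\,\dd\pi$, the supremum running over all couplings. As $\inner{x}{y}\ge -1$ with equality exactly when $y=-x$, evaluating at the product coupling shows that $\dwt(\mu,\nu)=2$ forces $\inner{x}{y}=-1$ for $\mu\otimes\nu$-a.e. pair, hence $\mu=\delta_x$ and $\nu=\delta_{-x}$ for some $x\in\Sn$. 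Thus the pairs realising the diameter are precisely the antipodal Dirac pairs. Because an isometry $\Psi$ preserves distances and hence the diameter, $\Psi$ maps $\lers{\delta_x:x\in\Sn}$ onto itself (and antipodal pairs to antipodal pairs). Writing $\Psi(\delta_x)=\delta_{\psi(x)}$ defines a bijection $\psi\colon\Sn\to\Sn$, and from $\dwt(\delta_x,\delta_y)=\|x-y\|$ we get $\|\psi(x)-\psi(y)\|=\|x-y\|$, so $\psi$ is an isometry of $\ler{\Sn,\|\cdot\|}$.

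\emph{Step 2: reduction and barycentre preservation.} Replacing $\Psi$ by $(\psi^{-1})_\#\circ\Psi$ (again an isometry), I may assume $\Psi(\delta_x)=\delta_x$ for every $x$, and the task becomes showing $\Psi=\id$. The key observation is that the squared distance to a Dirac mass is exactly the mean squared error function: $\dwt(\mu,\delta_x)^2=\int_{\Sn}\|y-x\|^2\,\dd\mu(y)=2-2\inner{x}{\bar\mu}$, where $\bar\mu=\int y\,\dd\mu(y)$ is the barycentre. Since $\dwt(\Psi\mu,\delta_x)=\dwt(\Psi\mu,\Psi\delta_x)=\dwt(\mu,\delta_x)$ for all $x$, we obtain $\inner{x}{\overline{\Psi\mu}}=\inner{x}{\bar\mu}$ for every $x\in\Sn$, whence $\overline{\Psi\mu}=\bar\mu$. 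So $\Psi$ preserves barycentres.

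\emph{Step 3 (the crux): separating measures with equal barycentre.} Barycentre information is all that the Dirac masses can detect, because every quantity $\dwt(\mu,\delta_x)$ depends on $\mu$ only through $\bar\mu$; hence no family of Dirac probes can distinguish two measures sharing a barycentre. This is precisely the difficulty flagged in the abstract: the Wasserstein potential method fails at $p=2$, and $\ler{\Sn,\|\cdot\|}$ is not geodesic, so there is no displacement interpolation remaining on the sphere. My plan is to bootstrap on the size of the support. Assuming inductively that $\Psi$ fixes every measure supported on at most $k$ points, I would use these already-fixed measures as probes: for $\mu$ with $|\supp\mu|\le k+1$ and any $\sigma$ with $|\supp\sigma|\le k$, we have $\dwt(\Psi\mu,\sigma)=\dwt(\Psi\mu,\Psi\sigma)=\dwt(\mu,\sigma)$, so $\Psi\mu$ and $\mu$ have identical distances to the entire probe family. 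The heart of the matter is to show that this family is rich enough to force $\Psi\mu=\mu$; this is where the mean squared error functions must serve as a surrogate for displacement interpolation, mimicking through on-sphere measures and their mean squared error functions the information that an ambient geodesic would carry inside $\Wt\ler{\Rn,\|\cdot\|}$ (into which $\Wtsr$ embeds isometrically), so as to recover $\mu$ beyond its barycentre. Granting the inductive step, every finitely supported measure is fixed; as such measures are dense and $\Psi$ is continuous, $\Psi=\id$, which after undoing Step 2 gives $\Psi=\psi_\#$.

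I expect Step 3 to be the main obstacle. Everything up to barycentre preservation is routine, whereas breaking the barycentre degeneracy without potentials and without on-sphere geodesics is exactly where the mean squared error device must do real work, and closing the inductive step—proving that the chosen probe family is determining—is the delicate point.
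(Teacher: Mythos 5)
Your Steps 1 and 2 are correct and coincide with the paper's Steps 1 and 2 (Dirac masses are characterised by the diameter being attained, and barycentres are preserved because $\dwt^2(\mu,\delta_x)=2-2\inner{x}{m(\mu)}$). But your Step 3 is a plan, not a proof: you explicitly defer ``proving that the chosen probe family is determining,'' and that deferred step is the entire mathematical content of the theorem --- in the paper it occupies Steps 3--6 together with three preparatory propositions (the translation identity, the characterisation of the minimizers of $Q_\alpha^{\mu,\nu}$, and the injectivity/surjectivity analysis of $p_\alpha(N,\cdot)$). There is also a hidden strengthening in your formulation: since $\Psi\mu$ is not known a priori to be finitely supported, the probe family would have to separate $\mu$ from \emph{arbitrary} measures, not just from other $(k+1)$-point measures.

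More seriously, the induction as stated cannot get off the ground. At $k=1$ it requires distances to Dirac probes to pin down two-point measures, but as you yourself observe these distances depend on $\mu$ only through $m(\mu)$, and the paper's own example $\mu_z=\frac{1}{2}\ler{\delta_z+\delta_{-z}}$ (constant distance $\sqrt{2}$ to every Dirac, barycentre $0$ for every $z$) shows the probe family is not determining. Nor can mean squared error functions built from already-fixed measures rescue this step: at stage $k=1$ one has $Q_\alpha^{\delta_x,\delta_y}(\rho)=2-2\inner{(1-\alpha)x+\alpha y}{m(\rho)}$, whose unique minimizer is the Dirac $\delta_{p_\alpha(x,y)}$, so no new measures get fixed. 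The paper needs genuinely different machinery exactly here: Kloeckner's metric characterisation of orthogonal supports shows $\Phi$ preserves the affine dimension of supports (Step 3); the translation identity of Proposition 4.1 and the geometry of the set $\vec{\mu}$ of admissible translation vectors then force two-point measures to be fixed (Step 4); distances to the now-fixed family $\alpha\delta_x+(1-\alpha)\delta_{-x}$ recover bisector masses via \cite[Lemma 3.17]{HIL}, and a covering argument by codimension-one subspheres shows that on finitely supported measures $\Phi$ can at most flip mass to antipodes (Step 5), which barycentre preservation rules out on open hemispheres. Only then do MSE functions appear, and in a different way than you envisage: the paper compares the unique minimizers of $Q_{\frac{1}{2}}^{\delta_N,\mu}$ and $Q_{\frac{1}{2}}^{\delta_N,\Phi(\mu)}$, where $\mu$ is the \emph{target} measure (using surjectivity of $\Phi$ to identify the value sets), exploiting that the minimizer $\rho^{(\frac{1}{2})}_{\delta_N\otimes\mu}$ is hemisphere-supported and hence already fixed, and that $\kappa\mapsto\rho^{(\frac{1}{2})}_{\delta_N\otimes\kappa}$ is injective by Proposition 4.3. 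None of this follows from your setup, so the proposal as written does not establish the theorem.
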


In our recent works \cite{HIL,TnSn}, recovering measures from their Wasserstein potentials --- see \eqref{eq:pot-def} for precise definition --- turned out to be a powerful method to prove isometric rigidity.
However, this method cannot be used in the case of $\Wtsn$, as shown by the following simple example. Let $\delta_x$ denote the Dirac measure concentrated at $x\in\mathbb{S}^n$, let $\mu_z:=\frac{1}{2}(\delta_z +\delta_{-z})$ for $z \in \mathbb{S}^n$, and note that for any $x \in \mathbb{S}^n$ we have $d_{\W_2}^2(\mu_z,\delta_x)=\frac{1}{2}(\|x-z\|^2+\|x+z\|^2)=2$ independently of $x$ and $z$ --- see \eqref{eq:wasser_def} for the precise definition of the $p$-Wasserstein distance $d_{\Wp}$. This means that every element of the set $\lers{\mu_z \, \middle| \, z \in \mathbb{S}^n}$ has the same Wasserstein potential function, and hence potentials do not determine measures uniquely in general.
\par
Our complimentary result Theorem \ref{thm:circle} demonstrates sensitivity of the Wasserstein potential method to the parameter value $p$. Namely, we show that, at least in the special case of $\mathbb{S}^1$, measures are uniquely determined by their potentials if $1 \leq p <2$, and hence $\mathcal{W}_p\left(\mathbb{S}^1,\|\cdot\|\right)$ is isometrically rigid.

\section{The Wasserstein space $\Wpsn$ and the Wasserstein potential}

In this section, we recall all the necessary notions and notations. 
Let $(Y,\rho)$ be a complete and separable metric space, $p\geq1$ a fixed real number, and $\mathcal{P}(Y)$ the set of all Borel probability measures on $Y$. The $p$-Wasserstein space $\Wp(Y,\rho)$, where $p\in[1,\infty)$, is then defined as the set 
\begin{equation*}
\left\{\mu\in\mathcal{P}(Y)\,\Bigg|\,\exists \hat{y}\in Y:~~\int\limits_Y \rho(y,\hat{y})^p~\mathrm{d}\mu(y)<\infty\right\}
\end{equation*}
of probability measures endowed with the \emph{$p$-Wasserstein metric}
\begin{equation}
 \label{eq:wasser_def}
d_{\W_p}(\mu, \nu):=\ler{\inf_{\pi \in \Pi(\mu, \nu)} \iint\limits_{Y \times Y} \rho(x,y)^p~\dd \pi(x,y)}^{1/p},
\end{equation}
where the infimum is taken over the set $\Pi(\mu,\nu)$ of all couplings of $\mu$ and $\nu$. A Borel probability measure $\pi$ on $Y \times Y$ is called a \emph{coupling} of $\mu$ and $\nu$ if $\pi\ler{A \times Y}=\mu(A)$ and $\pi\ler{Y \times B}=\nu(B)$ for all Borel sets $A,B\subseteq Y$. For more details about Wasserstein spaces, we refer the reader to the comprehensive textbooks \cite{AG,Fb,Santambrogio,Villani}. Now we only mention that optimal couplings always exist, and the infimum in (\ref{eq:wasser_def}) becomes minimum \cite[Theorem 1.5]{AG}. Furthermore, finitely supported measures are dense in Wasserstein spaces, see, e.g., \cite[Theorem 6.18]{Villani}. 

An \emph{isometric embedding} between metric spaces $(X,d)$ and $(Y,\rho)$ is a map $\phi\colon(X,d)\to(Y,\rho)$ which preserves distances, i.e., a map such that $d(x,x')=\rho\left(\phi(x),\phi(x')\right)$ for all $x,x'\in X$. We shall use the term \emph{isometry} for a surjective isometric embedding from a metric space onto itself. It is important to note that if $(X,d)$ is a compact metric space, then every isometric embedding $\phi\colon(X,d)\to(X,d)$ is surjective and hence an isometry \cite[Theorem 1.6.14]{MG-book}.

For a Borel-measurable map $\psi\colon Y \rightarrow Y$, its push-forward $\psi_\# \colon\mathcal{W}_p(Y,\rho)\to\mathcal{W}_p(Y,\rho)$ is defined by $\big(\psi_\#(\mu)\big)(A):=\mu(\psi^{-1}[A])$, where $A\subseteq Y$ is a Borel set and $\psi^{-1}[A]=\left\{x\in X\,|\,\psi(x)\in A\right\}$.
In particular, when $\psi\colon Y\to Y$ is an isometry, then so is $\psi_\#$ by the very definition of the Wasserstein distance, giving rise to a canonical embedding of the isometries of $(Y,\rho)$ to the isometries of $\mathcal{W}_p(Y,\rho)$.

In this paper, we consider the compact metric space $\left(\Sn,\|\cdot\|\right)$, where
\[
\Sn := \{x\in\R^{n+1}\colon \|x\|=1\}
\]
is the unit sphere of $\mathbb{R}^{n+1}$
equipped with the distance inherited from the Euclidean norm of $\R^{n+1}$, that is, $\mathrm{dist}(x,y)=\|x-y\|$. The point $-x$ is called the antipodal of $x$. Since $\left(\Sn,\|\cdot\|\right)$ is bounded, the Wasserstein space $\Wpsn$ is the entire set $\mathcal{P}\left(\Sn\right)$ endowed with the distance
\begin{equation} \label{eq: wasser def}
d_{\W_p}(\mu, \nu):=\ler{\inf_{\pi \in \Pi(\mu, \nu)} \iint\limits_{\Sn \times \Sn} \|x-y\|^p~\dd \pi(x,y)}^{1/p}.
\end{equation}  
We write $\Wp\ler{\Sn,\|\cdot\|}$ instead of the usual $\Wp\ler{\Sn}$ notation to avoid any confusion with the results in \cite{TnSn,S-R}. As the Wasserstein distance metrizes the weak convergence of probability measures over bounded metric spaces (see, e.g., \cite[Theorem 7.12]{Villani-03}), by Prokhorov's theorem, $\left(\Sn,\|\cdot\|\right)$ being compact tells us that $\Wp(\Sn,\|\cdot\|)$ is compact too --- see also Remark 6.19 in \cite{Villani}. This implies that every isometric embedding of $\Wp\ler{\Sn,\|\cdot\|}$ into itself is an isometry.
\par
For a measure $\mu\in\mathcal{P}\left(\Sn\right)$, its support $\supp(\mu)$ is the set of all points $x\in\Sn$ for which every open neighbourhood of $x$ has positive measure. As usual, $\delta_x$ denotes the Dirac measure supported on the single point $x\in\Sn$.\\

The question arises whether it is possible to identify a measure if we know its distance from all Dirac measures. (Recall that $d_{\W_p}(\delta_x,\delta_y)=\|x-y\|$ for all $x,y\in\Sn$ and thus the set of all Dirac measures is an isometric copy of the underlying metric space.) To answer this question, we first introduce the notion of \emph{Wasserstein potential} $\potmu$. For a given $\mu\in\Wpsn$, the Wasserstein potential is the function
\begin{equation}\label{eq:pot-def}
\potmu\colon\Sn\to\mathbb{R};\qquad	\potmu(x):=d_{\W_p}^p\left(\delta_x,\mu\right)=\int\limits_{\Sn} \|x-y\|^p ~\dd\mu(y).
\end{equation}

Now, the question above can be rephrased as follows: \emph{does the Wasserstein potential determine the measure uniquely?}

\section{Does the Wasserstein potential determine the measure uniquely?}
The answer to this question is no, in general. A prominent example is $\Wtsn$ where measures supported on antipodal points with both weights equal to $\frac{1}{2}$ have the same (constant) potential function --- see the example in Section 1, after Theorem \ref{thm:main}. Beyond this, exotic isometries of $\Wt(\R,|\cdot|)$ (see \cite[Section 5.1 and Section 5.2]{Kloeckner-2010}) are also counterexamples. 
\par 
However, we will now prove that it does in the case of $\mathbb{S}^1\simeq\T=\lers{z \in \C \, : \, \abs{z}=1}$ equipped with the distance function $r(z, \omega)=\abs{\fel (z-\omega)}$ for $1\leq p<2$. This normalization of the distance is consistent with the one used in \cite{Virosztek}. In this section, we assume that $1\leq p<2$, and we recall that the $p$-Wasserstein distance of $\mu, \nu \in \Wp(\T)$ in this case is  
\[
d_{\W_p}\ler{\mu,\nu}=\left(\inf_{\pi \in \Pi\ler{\mu, \nu}} \iint\limits_{\T \times \T} \abs{\fel (z-\omega)}^p~\dd\pi\ler{z,\omega}\right)^{1/p},
\]
and therefore for any $z\in\mathbb{T}$, the Wasserstein potential is of the form
\be \label{eq:dist-dirac}
\potmu(z)=d_{\W_p}^p\ler{\delta_z, \mu}=\int\limits_\T \abs{\fel (z-\omega)}^p ~\dd\mu\ler{\omega}.
\ee

We showed in \cite{TnSn} that Fourier analytic methods can sometimes solve the problem of rigidity in a very elegant way. For example, we showed that isometric rigidity of $\mathcal{W}_2(\mathbb{T},\sphericalangle)$ can be proved by using the Fourier transform of the Wasserstein potential, however, the same method fails in the case $\mathcal{W}_1(\mathbb{T},\sphericalangle)$. As we will see, if we endow $\mathbb{T}$ with the distance $r(z,\omega)=\big|\frac{1}{2}(z-\omega)\big|$, then the situation changes: the same method works to prove isometric rigidity of $\mathcal{W}_1(\mathbb{T},r)$, but fails in the case $\mathcal{W}_2(\mathbb{T},r)$.

Now we recall the very basics of Fourier analysis on the abelian group $\T$. The main reason for doing so is to fix the notation. The continuous \emph{characters} of $\T$ are exactly the power functions with an integer exponent. That is, if $\varphi_k(z)=z^k$ for all $k\in\mathbb{Z}$ and $\Gamma$ is the \emph{dual group} (i.e., the group of all continuous characters), then $\Gamma=\lers{\varphi_k\colon\mathbb{T}\to\mathbb{C} \, | \, k \in \Z}$, and $\Gamma \cong \Z$. The group $\T$ is compact, hence it admits a unique \emph{Haar probability measure} $\lambda$, which can be expressed explicitly as
\[
\dd\lambda\ler{z}=\frac{\dz}{2 \pi i z }.
\]
The \emph{Fourier transform} of a (complex-valued) function $f \in L^1\ler{\T, \lambda}$ is defined by
\[
\hat{f}(k)=\int\limits_\T f \overline{\varphi_k} \dl=\frac{1}{2 \pi i} \int\limits_\T f(z) z^{-(k+1)} \dz\qquad(k\in\mathbb{Z}).
\]
Let us denote the set of all (complex-valued) measures of finite total variation by $\M\ler{\T}$. The Fourier transform of $\mu \in \M\ler{\T}$ is defined by
\[
\hat{\mu}(k)=\int\limits_\T \overline{\varphi_n} \dd \mu =\int\limits_\T z^{-k}~\dd\mu\ler{z}\qquad(k\in\mathbb{Z}).
\]
We note that $L^1$ functions can be naturally identified with absolutely continuous measures (with respect to the Haar measure), see \cite[Subsection 1.3.4.]{rudin}.
The convolution of $L^1$ functions $f$ and $g$ is defined by
\be
\ler{f*g}(z)=\int\limits_\T f\left(z \omega^{-1}\right)g(\omega)~\dd\lambda(\omega)
\ee
and the convolution of $f \in L^1\ler{\T, \lambda}$ and $\mu \in \M(\T)$ is defined by
\be \label{eq:meas-conv-def}
\ler{f*\mu}(z)=\int\limits_\T f\ler{z \omega^{-1}}~\dd\mu(\omega).
\ee
It is a key identity that the Fourier transform factorizes the convolution, that is,
\be \label{eq:f-key-id}
\widehat{f * \nu}=\hat{f}\cdot \hat{\nu}.
\ee

Now we are ready to state and prove the main result of this section. It says that if $1\leq p <2$, then the Wasserstein space $\Wp(\mathbb{T},r)$ is isometrically rigid.

\begin{theorem} \label{thm:circle}
Let $p\in[1,2)$ be a real number and let $\Psi\colon \Wp\ler{\mathbb{T},r} \rightarrow \Wp\ler{\mathbb{T},r}$ be an isometry. Then there exists an isometry $\psi\colon\ler{\mathbb{T},r} \rightarrow\ler{\mathbb{T},r}$ such that $\Psi=\psi_\#$.
\end{theorem}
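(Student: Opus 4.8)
The plan is to run the Wasserstein potential method: first show that for $1\le p<2$ the potential $\potmu$ determines $\mu$ uniquely, then show that any isometry $\Psi$ permutes the Dirac masses, and finally match potentials to identify $\Psi$ with a push-forward.

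The heart of the argument, and the only place where the hypothesis $p<2$ enters, is the injectivity of the potential map. Writing $z=e^{i\theta}$ and using $|z-\omega|=|z\omega^{-1}-1|$ on $\T$, the kernel $\abs{\fel(z-\omega)}^p$ equals $f_p(z\omega^{-1})$ with $f_p(z):=\abs{\fel(z-1)}^p=\abs{\sin(\theta/2)}^p$, so by \eqref{eq:dist-dirac} and \eqref{eq:meas-conv-def} the potential is a convolution $\potmu=f_p*\mu$. By the factorization identity \eqref{eq:f-key-id}, $\widehat{\potmu}(k)=\hat f_p(k)\,\hat\mu(k)$ for every $k\in\Z$; hence if $\hat f_p(k)\neq0$ for all $k$, then $\hat\mu$ — and therefore $\mu$, by uniqueness of Fourier--Stieltjes coefficients — is recovered from $\potmu$. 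I would compute $\hat f_p(k)$ from the Beta integral $\int_0^{\pi/2}\sin^p\phi\,\cos(2k\phi)\,\dd\phi$, obtaining
\[
\hat f_p(k)=\frac{(-1)^k}{2^p}\,\frac{\Gamma(p+1)}{\Gamma\!\left(\tfrac p2+k+1\right)\Gamma\!\left(\tfrac p2-k+1\right)}.
\]
Since $1\le p<2$ forces $p/2\in[\tfrac12,1)$, in particular $p/2\notin\Z$, neither argument $\tfrac p2\pm k+1$ is ever a nonpositive integer, so neither Gamma factor in the denominator has a pole and $\hat f_p(k)\neq0$ for every $k$. This is sharp: at $p=2$ one has $f_2=\fel(1-\cos\theta)$, whose only nonzero coefficients sit at $k=0,\pm1$, which is precisely why the method collapses there. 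Pinning down this closed form and its nonvanishing is the step I expect to require the most care.

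Next I would show that $\Psi$ maps Diracs to Diracs. Since $r(x,y)=\fel|x-y|\le1$ with equality iff $y=-x$, the diameter of $\Wp\ler{\T,r}$ equals $1$, attained by antipodal Diracs. I claim $\mu$ is a Dirac if and only if there is some $\nu$ with $d_{\W_p}(\mu,\nu)=1$. Indeed, if $d_{\W_p}(\mu,\nu)=1$ then the product coupling $\mu\otimes\nu$, being a valid coupling of cost at most the maximal value $1$, must have cost exactly $1$; as the integrand $r(x,y)^p\le1$, this forces $y=-x$ for $(\mu\otimes\nu)$-a.e.\ $(x,y)$, and Fubini then gives $\nu=\delta_{-x}$ for $\mu$-a.e.\ $x$, which is possible only if $\mu$ is a single Dirac. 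Being at maximal distance is a metric property preserved by both $\Psi$ and $\Psi^{-1}$, so $\Psi$ restricts to a bijection of the Diracs; as these form an isometric copy of $\ler{\T,r}$, this restriction is an isometry $\psi$ of $\ler{\T,r}$ with $\Psi(\delta_x)=\delta_{\psi(x)}$.

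Finally I would close the loop using the potential. For any $\mu$ and any $x$, distance preservation gives
\[
\mathcal{T}^{(p)}_{\Psi(\mu)}(\psi(x))=d_{\W_p}^p\ler{\Psi(\delta_x),\Psi(\mu)}=d_{\W_p}^p\ler{\delta_x,\mu}=\potmu(x),
\]
so $\mathcal{T}^{(p)}_{\Psi(\mu)}=\potmu\circ\psi^{-1}$; on the other hand, since $\psi$ is an isometry, a change of variables yields $\mathcal{T}^{(p)}_{\psi_\#\mu}(y)=\int_\T r(\psi^{-1}(y),x)^p\,\dd\mu(x)=\potmu(\psi^{-1}(y))$, i.e.\ $\mathcal{T}^{(p)}_{\psi_\#\mu}=\potmu\circ\psi^{-1}$ as well. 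By the injectivity established in the first step, $\Psi(\mu)=\psi_\#\mu$ for every $\mu$, that is $\Psi=\psi_\#$. The only remaining routine points are that $f_p\in L^1\ler{\T,\lambda}$ so that \eqref{eq:f-key-id} applies, and the existence of optimal couplings, both already recorded above.
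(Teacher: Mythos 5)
Your proposal is correct, and its skeleton coincides with the paper's proof: Dirac measures are characterized metrically via the diameter $1$ being attained only at antipodal Dirac pairs, the potential is identified as the convolution $f_p*\mu$, and injectivity of $\mu\mapsto\potmu$ is reduced via \eqref{eq:f-key-id} to the nonvanishing of $\hat{f_p}$ together with uniqueness of Fourier--Stieltjes coefficients. The one place where you genuinely diverge is the proof of that nonvanishing, which is the heart of the matter. The paper expands $f_p(z)=\ler{1-\frac14\ler{2+z+z^{-1}}}^{p/2}$ by the binomial series and only reads off \emph{signs}: for $1\leq p<2$ each term with $k\geq1$ carries the factor $\binom{p/2}{k}(-1)^k<0$, so $\hat{f_p}(0)>0$ and $\hat{f_p}(k)<0$ for $k\neq0$ --- an elementary, self-contained argument that never computes the coefficients. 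You instead derive the closed form
\[
\hat{f_p}(k)=\frac{(-1)^k}{2^p}\,\frac{\Gamma(p+1)}{\Gamma\ler{\tfrac{p}{2}+k+1}\Gamma\ler{\tfrac{p}{2}-k+1}},
\]
which I verified (e.g.\ it gives $\hat{f_1}(0)=2/\pi$, and at $p=2$ it reproduces $\hat{f_2}(0)=\fel$, $\hat{f_2}(\pm1)=-\frac14$ and zero for $|k|\geq2$); since $p/2\notin\Z$ for $p\in[1,2)$, neither Gamma factor in the denominator hits a pole, so indeed $\hat{f_p}(k)\neq0$ for all $k\in\Z$. Your route buys more information: exact coefficient values, and a transparent explanation of why the method collapses exactly at $p=2$ (the coefficients vanish for $|k|\geq2$, matching the paper's antipodal counterexample) --- at the cost of invoking a Beta-type table integral whose derivation you leave implicit and would need to justify, whereas the paper's series argument needs nothing beyond the binomial expansion. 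Two further small differences, both in your favour: you actually prove the equality case in the diameter characterization (every coupling has cost at most $1$, so the product coupling is optimal and forces $\nu=\delta_{-x}$ for $\mu$-a.e.\ $x$), which the paper merely asserts with ``first observe''; and you bypass the paper's reduction to the isometry $\ler{\psi^{-1}}_\#\circ\Psi$ fixing all Diracs by matching $\mathcal{T}^{(p)}_{\Psi(\mu)}$ and $\mathcal{T}^{(p)}_{\psi_\#\mu}$ directly, a cosmetic but equivalent repackaging of the same final step.
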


\begin{proof} First observe that the diameter of $\Wp(\mathbb{T},r)$ is 1, and $d_{\W_p}(\mu,\nu)=1$ if and only if $\mu=\delta_x$ and $\nu=\delta_{-x}$ for some $x\in\mathbb{T}$. Since $\Psi$ is an isometry, we have
\begin{equation}\label{diracfix}
    1=d_{\W_p}(\delta_x,\delta_{-x})=d_{\W_p}\left(\Psi(\delta_x),\Psi(\delta_{-x})\right)
\end{equation}
for all $x\in\mathbb{T}$, which implies that $\Psi\left(\delta_x\right)$ is a Dirac measure as well.

Let us define the map $\psi\colon\mathbb{T}\to\mathbb{T}$ via the identity $\Psi(\delta_x)=\delta_{\psi(x)}$ -- this means that $\Psi$ coincides with $\psi_\#$ on the set of Dirac measures. The map $\psi\colon(\mathbb{T},r)\to(\mathbb{T},r)$ is in fact an isometry: 
\[
r\ler{\psi(x),\psi(y)}=d_{\W_p}\ler{\delta_{\psi(x)},\delta_{\psi(y)}}
=d_{\W_p}\ler{\Psi(\delta_x),\Psi(\delta_y)}
=d_{\W_p}\ler{\delta_x,\delta_y}=r(x,y)
\]
for all $x,y\in\mathbb{T}$, and $(\mathbb{T},r)$ is compact. These together combine into that $\ler{\psi^{-1}}_\#\circ\Psi$ is an isometry which fixes all Dirac measures.
If we now prove that any isometry of $\mathcal{W}_p(\mathbb{T},r)$ which fixes all Dirac measures must be the identity, we are done: in that case, $\ler{\psi^{-1}}_\#\circ\Psi=\id_{\mathcal{W}_p(\mathbb{T},r)}$, i.e., $\Psi=\psi_\#$ as claimed.

From now on, let us assume that $\Phi\colon\mathcal{W}_p(\mathbb{T},r)\to\mathcal{W}_p(\mathbb{T},r)$ is an isometry such that $\Phi(\delta_z)=\delta_z$ for all $z\in\mathbb{T}$. Then we have
\[
\potmu(z)=\dwpp\ler{\delta_z, \mu}=\dwpp\ler{\Phi(\delta_z),\Phi(\mu)}=\dwpp\ler{\delta_z, \Phi\ler{\mu}}=\mathcal{T}^p_{\Phi(\mu)}(z)
\]
for all $z \in \mathbb{T}$ and $\mu \in \Wp\ler{\mathbb{T},r}$. The question is whether this implies $\mu=\Phi(\mu)$. The proof will be done once we prove that a measure $\mu\in\Wp(\mathbb{T},r)$ is uniquely determined by its Wasserstein potential. To this end, assume that $\mu$ and $\nu$ are two measures such that
\begin{equation}\label{pmupnu}
\potmu(z)=\potnu(z)\quad\mbox{for all}\ \ z\in\mathbb{T}.
\end{equation}
We need to show that \eqref{pmupnu} implies $\mu=\nu$. Let us introduce the map 
\begin{equation}\label{fp} 
f_p(z):=\abs{\fel (z-1)}^p.
\end{equation}
Then by \eqref{eq:dist-dirac} and \eqref{eq:meas-conv-def} one can observe that $\potmu(z)=(f_p*\mu)(z)$ holds for all $z \in \T$ and $\mu \in \Wp(\T)$. Indeed, we have
\begin{equation} \label{eq:dist-conv}
\begin{split}
\potmu(z)&=\dwpp\ler{\delta_z, \mu}=\int\limits_\T \abs{\fel (z-\omega)}^p~\dd\mu\ler{\omega}\\
&=\int\limits_\T \abs{\fel \left(z\omega^{-1}-1\right)}^p~\dd\mu\ler{\omega}
=\int\limits_\T f_p\ler{z \omega^{-1}}~\dd\mu\ler{\omega}=\ler{f_p*\mu}(z).
\end{split}
\end{equation}
The key observation is that the Fourier transform of $f_p$ does not vanish anywhere, that is, $\hat{f_p}(n)\neq 0$ for all $n \in \Z$.
For $n=0$, we have
\[
\hat{f_p}(0)=\int\limits_\T \abs{\fel (z-1)}^p~\dd\lambda(z)>0,
\]
while for $n \neq 0$, we use that
\[
f_p(z)=\ler{\abs{\fel (z-1)}^2}^{\frac{p}{2}}=\ler{\frac{1}{4}\ler{2-z-z^{-1}}}^{\frac{p}{2}}=\ler{1-\frac{1}{4}\ler{2+z+z^{-1}}}^{\frac{p}{2}},
\]
and by the binomial series expansion we get that
\be \label{eq:binom-expansion}
f_p(z)=\sum_{k=0}^{\infty} \binom{\frac{p}{2}}{k} \ler{\frac{-1}{4}\ler{2+z+z^{-1}}}^k,
\ee
where $\binom{\frac{p}{2}}{0}=1$ and $\binom{\frac{p}{2}}{k}=\frac{\prod_{j=0}^{k-1}\left(\frac{p}{2}-j\right)}{k!}$. Using that the sign of $\binom{\frac{p}{2}}{k} \ler{-1}^k$ is negative for all $k\geq 1$, equality \eqref{eq:binom-expansion} can be written as
\be\label{eq:binom-atalakitott}
f_p(z)=1-\left\{\frac{p}{2}\cdot\frac{2+z+z^{-1}}{4}+\sum_{k=2}^{\infty}\Bigg(\frac{\frac{p}{2}\prod_{j=1}^{k-1}(j-\frac{p}{2})}{k!}\Big(\frac{2+z+z^{-1}}{4}\Big)^k\Bigg)\right\}.
\ee
It is a useful feature of the group $\T$ that the Fourier series of a function coincides with its power series. Therefore, the above binomial expansion gives us useful information about $\hat{f_p}$, namely, $\hat{f_p}(k)$ coincides with the coefficient of $z^k$ in the expansion \eqref{eq:binom-expansion}. 

Let us note that for $n \neq 0$, the coefficient of $z^n$ must be strictly negative because the expressions $\frac{p}{2}, 1-\frac{p}{2},2-\frac{p}{2}, \dots$ are all positive -- here we use the assumption that $p<2$. So we obtained that $\hat{f_p}(0)>0$ and $\hat{f_p}(n)<0$ for $n \neq 0$ which means that $\hat{f_p}(n)\neq 0$ for all $n \in \Z$.
\par
By \eqref{eq:dist-conv}, the assumption that $\potmu(z)=\potnu(z)$ for all $z\in\mathbb{T}$ implies that $f_p * \mu=f_p * \nu$. By \eqref{eq:f-key-id}, this means that $\hat{f_p} \cdot \hat{\mu}=\hat{f_p} \cdot \hat{\nu}$. Since $\hat{f_p}(n)\neq 0$ for every $n$, we can deduce that $\hat{\mu}=\hat{\nu}$, but the Fourier transform completely determines the measure \cite[Chapter 1]{rudin}, hence $\mu=\nu$, and the proof is done.

\end{proof}

\section{Isometric rigidity of $\Wtsr$ --- proof of Theorem \ref{thm:main}}

The assumption $p<2$ was crucial in the previous section, and therefore the quadratic case cannot be handled with the same Fourier-analytic technique. In this section, we use a method that allows us to prove isometric rigidity in the quadratic case not only over the circle but over higher-dimensional spheres too. We start this section with three propositions which will be utilized later in the proof of Theorem \ref{thm:main}.
\par 
The first proposition, which can be found also in the Appendix of \cite{HIL} (see the proof of Lemma 3.13 there), helps us understand how a translation affects the Wasserstein distance. For $\mu\in\mathcal{P}\left(\Rn\right)$ and $v\in \Rn$, the translation of $\mu$ by $v$ is the measure $(t_v)_\#\mu\in\mathcal{P}\left(\Rn\right)$ where $t_v\colon \Rn\to \Rn$, $x\mapsto x+v$ is the translation by $v$. Recall that elements of $\Wtsr$ can be considered as elements of $\Wt(\mathbb{R}^{n+1},\|\cdot\|)$ because $\Wt(\mathbb{S}^n,\|\cdot\|)$ naturally embeds into $\Wt(\mathbb{R}^{n+1},\|\cdot\|)$. The barycenter of $\mu\in\Wtsr$ is defined to be the point $m(\mu)=\int_{\mathbb{S}^n} x~\mathrm{d}\mu(x)\in\mathbb{R}^{n+1}$. Note that the barycenter $m(\mu)$ defined above is also the unique point in $\R^{n+1}$ satisfying $\inner{m(\mu)}{z}=\int_{\R^{n+1}} \inner{x}{z} \dd \mu(x)$ for all $z \in \R^{n+1}$, which equation is used to define the barycenter in an infinite-dimensional setting. However, in finite dimensions, the above direct definition, which does not refer to the dual space, is available.

\begin{proposition}\label{prop:translation}
Let $\mu,\nu\in\Wt\left(\Rn\right)$ and $v\in\Rn$. Then we have
	\begin{equation}\label{eq:transl}
	\dwt^2\left((t_v)_\#\mu,\nu\right) = \dwt^2(\mu,\nu) + \left\langle v, v + 2m(\mu) - 2m(\nu) \right\rangle.
	\end{equation}
	In particular, substituting $v = m(\nu) - m(\mu)$ gives
	\begin{equation}\label{eq:bari-transl} \dwt^2(\mu,\nu) = \dwt^2\left((t_{- m(\mu)})_\#\mu,(t_{- m(\nu)})_\#\nu\right) + \|m(\nu) - m(\mu)\|^2. \end{equation}
		Subsequently, $\nu$ is a translated version of $\mu$ if and only if $\dwt(\mu,\nu) = \|m(\nu) - m(\mu)\|$.
\end{proposition}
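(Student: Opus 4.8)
The plan is to establish the master identity \eqref{eq:transl} first and then read off both \eqref{eq:bari-transl} and the concluding characterisation as direct consequences. The mechanism behind \eqref{eq:transl} is that translating one marginal sets up a natural bijection on the space of couplings which changes the transport cost only by a constant, so that the two infima defining the Wasserstein distances differ by precisely that constant.

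First I would fix an arbitrary coupling $\pi\in\Pi(\mu,\nu)$ and observe that $(t_v\times\id)_\#\pi$ is a coupling of $(t_v)_\#\mu$ and $\nu$; conversely every coupling of $(t_v)_\#\mu$ and $\nu$ arises this way, since $(t_{-v}\times\id)_\#$ inverts the operation. For such a pair of couplings the quadratic cost expands as
\begin{equation*}
\iint \|(x+v)-y\|^2\,\dd\pi(x,y)=\iint\|x-y\|^2\,\dd\pi(x,y)+2\inner{v}{\int(x-y)\,\dd\pi(x,y)}+\|v\|^2.
\end{equation*}
The cross term only sees the marginals, so $\int x\,\dd\pi=m(\mu)$ and $\int y\,\dd\pi=m(\nu)$ turn it into $2\inner{v}{m(\mu)-m(\nu)}$, which together with $\|v\|^2$ is exactly $\inner{v}{v+2m(\mu)-2m(\nu)}$. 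Crucially, this additive term does not depend on the chosen $\pi$; hence the bijection shifts every coupling's cost by the same constant, the infima correspond, and \eqref{eq:transl} follows.

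For \eqref{eq:bari-transl} I would substitute $v=m(\nu)-m(\mu)$ into \eqref{eq:transl}, which yields $\dwt^2\big((t_{m(\nu)-m(\mu)})_\#\mu,\nu\big)=\dwt^2(\mu,\nu)-\|m(\nu)-m(\mu)\|^2$. Writing both $(t_{m(\nu)-m(\mu)})_\#\mu$ and $\nu$ as the common translate by $m(\nu)$ of the centred measures $(t_{-m(\mu)})_\#\mu$ and $(t_{-m(\nu)})_\#\nu$, and using that $\dwt$ is unchanged when both arguments are translated by the same vector (couplings transform by $(t_w\times t_w)_\#$ and $\|(x+w)-(y+w)\|=\|x-y\|$), I may replace the left-hand distance by $\dwt^2\big((t_{-m(\mu)})_\#\mu,(t_{-m(\nu)})_\#\nu\big)$, giving \eqref{eq:bari-transl}.

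Finally, the characterisation falls straight out of \eqref{eq:bari-transl}. If $\nu=(t_w)_\#\mu$ then $m(\nu)=m(\mu)+w$, so the two centred measures coincide and the first term on the right of \eqref{eq:bari-transl} vanishes, forcing $\dwt(\mu,\nu)=\|m(\nu)-m(\mu)\|$. Conversely, equality here forces $\dwt\big((t_{-m(\mu)})_\#\mu,(t_{-m(\nu)})_\#\nu\big)=0$, hence the centred measures are equal, and translating the common measure back by $m(\nu)$ exhibits $\nu$ as $(t_{m(\nu)-m(\mu)})_\#\mu$. I expect the only point requiring genuine care to be the first step --- verifying that $(t_v\times\id)_\#$ is a cost-shifting bijection on coupling sets, so that the infimum may legitimately be transferred --- while the remaining deductions are purely formal once \eqref{eq:transl} is in hand.
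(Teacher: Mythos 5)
Your proposal is correct and takes essentially the same route as the paper: the paper also establishes \eqref{eq:transl} by the coupling bijection $\pi\mapsto (t_{(v,0)})_\#\pi$ (your $(t_v\times\id)_\#\pi$) together with the quadratic expansion of the cost, whose cross term depends only on the marginals, and then obtains \eqref{eq:bari-transl} by translating both arguments by $m(\nu)$ exactly as you do. The only difference is cosmetic --- you spell out the final ``translate'' characterisation via nonnegativity of $\dwt$, which the paper leaves implicit.
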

\begin{proof}
	For any $\pi\in\Pi(\mu,\nu)$ and $v\in\Rn$, we have $(t_{(v,0)})_\#\pi\in\Pi\left((t_v)_\#\mu,\nu\right)$, and vice versa. (Here, $0$ stands for $0 \in \R^{n+1}$.) Hence
	\begin{align*}
	\dwt^2\left((t_v)_\#\mu,\nu\right) &= \inf_{\pi \in \Pi(\mu, \nu)} \iint_{\Rn \times \Rn} \|x-y\|^2~\dd \ler{(t_{(v,0)})_\#\pi}(x,y) \\
	&= \inf_{\pi \in \Pi(\mu, \nu)} \iint_{\Rn \times \Rn} \|x+v-y\|^2~\dd \pi(x,y)\\
	&= \inf_{\pi \in \Pi(\mu, \nu)} \iint_{\Rn \times \Rn} \ler{\|x-y\|^2+\|v\|^2+2\inner{x}{v}-2\inner{y}{v}}~\dd \pi(x,y) \\
	&= \dwt^2\left(\mu,\nu\right) + \|v\|^2 + 2\int_{\Rn} \inner{x}{v}~\dd \mu(x) -2\int_{\Rn}\inner{y}{v}~\dd \nu(y),
	\end{align*}
	which gives \eqref{eq:transl}. The identity \eqref{eq:bari-transl} follows if we translate both arguments in the left-hand side by the vector $m(\nu)$.
\end{proof}

In quadratic Wasserstein spaces over uniquely geodesic spaces, the $\alpha$-weighted mean squared-error function
\[
\rho \mapsto (1-\alpha)d_{\mathcal{W}_2}^2(\mu,\rho)+\alpha d_{\mathcal{W}_2}^2(\nu, \rho)
\]
defined by $\mu$ and $\nu$ has a unique minimizer --- provided that the optimal coupling of $\mu$ and $\nu$ is unique --- which is the \emph{displacement convex combination} or \emph{displacement interpolation} of $\mu$ and $\nu$ with weights $(1-\alpha)$ and $\alpha$ \cite{Villani-03,Villani}.
Intuitively, this is the measure that we obtain if we start moving $\mu$ to $\nu$ according to the optimal transport plan, but stop at proportion $\alpha$ of the journey.
A great challenge concerning $\ler{\mathbb{S}^n, \|\cdot\|}$ is that it has no geodesics at all, and hence the quadratic Wasserstein space $\Wtsn$ has no geodesics either. Still, mean squared-error functions make perfect sense on $\Wtsn$, they are invariant under isometries in an appropriate sense, and hence if the measures $\mu$ and $\nu$ defining them are fixed by an isometry $\Phi$, then so are the unique minimizers --- if they exist.
We will prove in Proposition \ref{prop:min-char} that on $\ler{\Sn,\|\cdot\|}$, the minimizer of the $\alpha$-weighted squared-error function is the projection of the displacement interpolation onto the sphere. This is similar to how for a measure $\mu\in\mathcal{P}\ler{\Sn}$, its closest Dirac measure supported on a point in $\Rn$ is $\delta_{m(\mu)}$, while among those supported on $\Sn$, it is the projection of $\delta_{m(\mu)}$.
We are going to exploit this characterisation in Step 6 of the proof of Theorem \ref{thm:main}.

We will use the following projection of the $\alpha$-weighted mean of two points $x,y$ onto $\mathbb{S}^n$ frequently:

\begin{equation*}\label{eq:p-alpha-def}
p_{\alpha}(x,y):=\frac{(1-\alpha)x+\alpha y}{\|(1-\alpha)x+\alpha y\|} \qquad \ler{\alpha \in [0,1], \, x,y \in \mathbb{S}^n}.  
\end{equation*} 
Note that $p_\alpha(x,y)$ is not defined when $\alpha=\frac{1}{2}$ and $x=-y$.

Let us define the cost $c_\alpha\colon \mathbb{S}^n \times \mathbb{S}^n \rightarrow [0,2]$ by
\be \label{eq:c-alpha-def}
c_{\alpha}(x,y):=\min_{z \in \Sn} \lers{(1-\alpha)\|x-z\|^2+\alpha \|z-y\|^2} =2(1-\|(1-\alpha)x+\alpha y\|).
\ee
If $\pi \in \mathcal{P}\left(\mathbb{S}^n \times \mathbb{S}^n\right)$ is a coupling of $\mu$ and $\nu$, then let $\rho_\pi^{(\alpha)} \in \Wtsn$ be defined to be the displacement interpolation in $\R^{n+1}$ at time $\alpha$ between $\mu$ and $\nu$ according to the plan $\pi$, projected to $\Sn$. Formally,
\be \label{eq:rho-pi-def}
\rho_{\pi}^{(\alpha)}:=\ler{\lers{(x,y) \mapsto p_{\alpha}(x,y)}}_{\#} \pi.
\ee
Now let $\mu, \nu \in \Wtsn$ and consider the $\alpha$-weighted mean squared error
\be \label{eq:mse-def}
Q_{\alpha}^{\mu, \nu}\colon \, \Wtsn \rightarrow [0,\infty); \,  \rho \mapsto Q_{\alpha}^{\mu, \nu}(\rho):= (1-\alpha) d_{\mathcal{W}_2}^2(\mu, \rho)+\alpha d_{\mathcal{W}_2}^2(\nu, \rho).
\ee

\begin{proposition} \label{prop:min-char}
Let $\mu, \nu \in \Wtsn$ be such that there is a unique optimal transport plan $\pi_{*}$ for them with respect to the cost $c_\alpha$ defined in \eqref{eq:c-alpha-def}.
Suppose that $\alpha\neq\frac{1}{2}$ or $\alpha=\frac{1}{2}$ and $\pi_*\ler{\lers{(z,-z) : z\in \mathbb{S}^n}}=0$.
Then the mean squared error $Q_{\alpha}^{\mu, \nu}$ defined in \eqref{eq:mse-def} has a unique minimizer which is equal to $\rho_{\pi_{*}}^{(\alpha)}$, the push-forward of $\pi_*$ by $p_\alpha$ --- see \eqref{eq:rho-pi-def} for the precise definition. If $\pi_*\ler{\lers{(z,-z) : z\in \mathbb{S}^n}}>0$ and $\alpha=\frac{1}{2}$, then $\rho_{\pi_*}^{(\alpha)}=\rho_{\pi_*}^{(\frac{1}{2})}$ is not well-defined and $Q_\alpha^{\mu,\nu}=Q_{\frac{1}{2}}^{\mu,\nu}$ has infinitely many minimizers.
\end{proposition}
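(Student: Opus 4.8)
The plan is to sandwich $Q_\alpha^{\mu,\nu}$ between matching lower and upper bounds expressed through the optimal $c_\alpha$-transport cost $\int c_\alpha\,\dd\pi_*$, and then to read off both the uniqueness statement and its failure from the cases of equality. The starting observation, obtained by expanding norms and using $\|x\|=\|y\|=\|z\|=1$, is that for fixed $x,y\in\Sn$ the map $z\mapsto(1-\alpha)\|x-z\|^2+\alpha\|z-y\|^2$ equals $2-2\inner{(1-\alpha)x+\alpha y}{z}$. Hence it is minimized over $\Sn$ exactly at $z=p_\alpha(x,y)$ with value $c_\alpha(x,y)$, and this minimizer is \emph{unique} precisely when $(1-\alpha)x+\alpha y\neq 0$, i.e. away from the degenerate antipodal configuration $\alpha=\tfrac12$, $y=-x$.

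For the lower bound I would take an arbitrary $\rho\in\Wtsn$, choose optimal quadratic couplings $\gamma_1\in\Pi(\mu,\rho)$ and $\gamma_2\in\Pi(\rho,\nu)$, and glue them along their common $\rho$-marginal into a three-fold measure $\sigma$ on $\Sn\times\Sn\times\Sn$ with coordinates $(x,z,y)$. Then
\[
Q_\alpha^{\mu,\nu}(\rho)=\int\big[(1-\alpha)\|x-z\|^2+\alpha\|z-y\|^2\big]\,\dd\sigma\ge\int c_\alpha(x,y)\,\dd\sigma_{xy}\ge\int c_\alpha\,\dd\pi_*,
\]
where the first inequality is the pointwise bound above and the second uses that the $(x,y)$-marginal $\sigma_{xy}$ lies in $\Pi(\mu,\nu)$ together with optimality of $\pi_*$. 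For the upper bound I would push $\pi_*$ forward by $(x,y)\mapsto(x,p_\alpha(x,y))$ and by $(x,y)\mapsto(p_\alpha(x,y),y)$ to produce admissible couplings of $\mu$, respectively $\nu$, with $\rho_{\pi_*}^{(\alpha)}=(p_\alpha)_\#\pi_*$; summing the two resulting estimates and using that $p_\alpha$ realizes the pointwise minimum yields $Q_\alpha^{\mu,\nu}(\rho_{\pi_*}^{(\alpha)})\le\int c_\alpha\,\dd\pi_*$. Thus $\rho_{\pi_*}^{(\alpha)}$ attains the lower bound and is a minimizer.

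Uniqueness, which I expect to be the delicate step, comes from tracing the equality cases. If $\rho$ is any minimizer, then both inequalities in the displayed chain are equalities. Equality in the second one, combined with $\sigma_{xy}\in\Pi(\mu,\nu)$ and the assumed \emph{uniqueness} of the optimal $c_\alpha$-coupling, forces $\sigma_{xy}=\pi_*$. Equality in the first one forces $z=p_\alpha(x,y)$ for $\sigma$-a.e. $(x,z,y)$; here the hypothesis (namely $\alpha\neq\tfrac12$, or $\alpha=\tfrac12$ with $\pi_*$ not charging $\{(z,-z):z\in\Sn\}$) guarantees that $(1-\alpha)x+\alpha y\neq 0$ holds $\pi_*$-a.e., so the pointwise minimizer is genuinely unique and the selection is forced. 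Consequently $\sigma$ is concentrated on the graph of $(x,y)\mapsto(x,p_\alpha(x,y),y)$ lying over $\pi_*$, and its $z$-marginal $\rho$ equals $(p_\alpha)_\#\pi_*=\rho_{\pi_*}^{(\alpha)}$.

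Finally, for the degenerate case $\alpha=\tfrac12$ with $\pi_*(\{(z,-z):z\in\Sn\})>0$, the map $p_{1/2}$ is undefined on a set of positive $\pi_*$-measure, so $\rho_{\pi_*}^{(1/2)}$ is not well-defined. To exhibit infinitely many minimizers I would split $\pi_*$ into its antipodal part and the rest: on an antipodal pair $(z,-z)$ every $w\in\Sn$ satisfies $\tfrac12\|z-w\|^2+\tfrac12\|w+z\|^2=2=c_{1/2}(z,-z)$, so any measurable selection $w(\cdot)$ on the antipodal part, combined with $p_{1/2}$ elsewhere, produces via the very same upper-bound construction a measure attaining $\int c_{1/2}\,\dd\pi_*$, hence a minimizer. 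Different constant selections place mass at different points and give genuinely distinct measures, yielding infinitely many minimizers. The only real subtlety throughout is this degenerate antipodal locus, which the hypotheses are designed precisely to isolate.
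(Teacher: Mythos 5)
Your proposal is correct and follows essentially the same route as the paper: a pointwise analysis showing $z\mapsto(1-\alpha)\|x-z\|^2+\alpha\|z-y\|^2$ is minimized uniquely at $p_\alpha(x,y)$ off the antipodal locus, a lower bound via gluing optimal quadratic couplings and projecting to the $(x,y)$-marginal, and then tracing the equality cases using the assumed uniqueness of $\pi_*$. Your only departures are ones of added care rather than method: you verify explicitly that $\rho_{\pi_*}^{(\alpha)}$ attains the bound via the two pushforward couplings (which the paper leaves implicit in its ``saturated if and only if'' remark), and you carry out the ``simple direct computation'' with measurable selections on the antipodal part that the paper merely alludes to for the degenerate case $\alpha=\tfrac{1}{2}$.
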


\begin{proof}
We proceed by establishing a lower bound for \eqref{eq:mse-def} and taking care of the case of equality. Let $\rho \in \Wtsn$ be arbitrary, and let $\pi_{\mu, \rho}$ and $\pi_{\rho,\nu}$ be optimal transport plans (w.r.t. the quadratic distance) between $\mu$ and $\rho$, and $\rho$ and $\nu$, respectively. Let $\pi_{\mu,\rho,\nu} \in \mathcal{P}\left(\Sn \times \Sn \times \Sn\right)$ be the \emph{gluing} of $\pi_{\mu, \rho}$ and $\pi_{\rho,\nu}$ --- see \cite[Lemma 7.6]{Villani-03} for the precise definition. Then $\pi_{\mu,\nu}:=(\pi_{\mu,\rho,\nu})_{1,3} \in \mathcal{P}\left(\Sn \times \Sn\right)$ is a coupling of $\mu$ and $\nu$. Now
\begin{equation}\label{eq:lb-2}
\begin{split}
Q_{\alpha}^{\mu, \nu}(\rho)&=(1-\alpha) d_{\mathcal{W}_2}^2(\mu, \rho)+\alpha d_{\mathcal{W}_2}^2(\nu, \rho)\\
&=(1-\alpha) \iint_{\Sn \times \Sn} \|x-z\|^2\, \dd \pi_{\mu, \rho}(x,z)+
\alpha \iint_{\Sn \times \Sn} \|z-y\|^2\, \dd \pi_{\rho, \nu}(z,y)\\
&=\iiint_{\Sn \times \Sn \times \Sn} (1-\alpha) \|x-z\|^2+\alpha \|z-y\|^2\, \dd \pi_{\mu,\rho,\nu}(x,z,y)\\
&\geq\iiint_{\Sn \times \Sn \times \Sn} c_{\alpha}(x,y)\, \dd \pi_{\mu,\rho,\nu}(x,z,y)=\iint_{\Sn \times \Sn} c_\alpha (x,y)\, \dd \pi_{\mu,\nu} (x,y).
\end{split}
\end{equation}
The inequality \eqref{eq:lb-2} is saturated if and only if $z=p_{\alpha}(x,y)$ for $\pi_{\mu,\rho,\nu}$-a.e. $(x,z,y) \in \left(\Sn\right)^3$, that is, if $\rho=\rho_{\pi_{\mu, \nu}}^{(\alpha)}$. Moreover, the right-hand side of \eqref{eq:lb-2} is minimal if and only if $\pi_{\mu,\nu}=\pi_{*}$. Consequently,
\[
Q_{\alpha}^{\mu, \nu}(\rho) \geq \min_{\pi \in \Pi(\mu, \nu)} \lers{\iint_{\Sn \times \Sn} c_\alpha(x,y)\, \dd \pi(x,y)}=\iint_{\Sn \times \Sn} c_\alpha(x,y)\, \dd \pi_*(x,y)
\]
and the only $\rho$ realizing this minimum is $\rho_{\pi_{*}}^{(\alpha)}$. On the other hand, if $\pi_{*}$ puts weight on antipodal points, that is, $\pi_*\ler{\lers{(z,-z) : z\in \mathbb{S}^n}}>0$, and $\alpha=\frac{1}{2}$, then we have an infinite collection of minimizing measures by the theorems of Thales and Pythagoras --- or by a simple direct computation.
\end{proof}

In the next proposition, we consider the case when the first argument of $p_\alpha$ is fixed and clarify injectivity and surjectivity properties of $p_\alpha$ as $\alpha$ varies from 0 to 1.

\begin{proposition} \label{prop:inj-surj}
    Let $\alpha\in\left(0,\frac{1}{2}\right)\cup\left(\frac{1}{2},1\right]$ and let $N \in \mathbb{S}^n$ be arbitrary but fixed  --- it may be considered as the ``north pole''. Let $p_\alpha(N, \cdot)\colon \mathbb{S}^n\to \mathbb{S}^n$ be the map sending $u$ to
    \[
    p_\alpha(N,u)=\frac{(1-\alpha)N+\alpha u}{\|(1-\alpha)N+\alpha u\|}.
    \]
    Then for $\alpha\in\left(\frac{1}{2},1\right]$, $p_\alpha(N, \cdot)$ is bijective.
    For $\alpha\in\left(0,\frac{1}{2}\right)$, $p_\alpha(N, \cdot)$ is neither surjective nor injective: it is 2-to-1 for almost all points of $\mathbb{S}^n$.
    Finally, $p_{\frac{1}{2}}(N, \cdot)\colon \mathbb{S}^n\setminus\{-N\}\to \mathbb{S}^n\setminus\{-N\}$ is injective, and its range is the open ``upper'' hemisphere $\lers{z \in \mathbb{S}^n \, \middle| \, \inner{z}{N}>0}$.
\end{proposition}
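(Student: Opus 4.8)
The plan is to recognise $p_\alpha(N,\cdot)$ as a \emph{central (radial) projection} of a sphere and to read off injectivity and surjectivity from the position of the origin relative to that sphere. Concretely, write $p_\alpha(N,u) = \proj_0\ler{A_\alpha(u)}$, where $A_\alpha\colon \Sn \to \Rn$ is the affine map $u \mapsto (1-\alpha)N + \alpha u$ and $\proj_0\colon \Rn\setminus\{0\} \to \Sn$ is the radial projection $p \mapsto p/\|p\|$. Since $A_\alpha$ is an affine bijection from $\Sn$ onto the sphere
\[
S_\alpha := \lers{(1-\alpha)N + \alpha u \, : \, u \in \Sn},
\]
which has centre $(1-\alpha)N$ and radius $\alpha$, the fibres of $p_\alpha(N,\cdot)$ are in bijection with the fibres of $\proj_0$ restricted to $S_\alpha$ (with the origin removed from $S_\alpha$ when $0 \in S_\alpha$). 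Thus the whole statement reduces to a classical question: in how many points does a ray emanating from the origin meet $S_\alpha$?

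The answer is governed by comparing the distance $\|(1-\alpha)N\| = 1-\alpha$ from the origin to the centre with the radius $\alpha$. For $\alpha \in \left(\fel,1\right]$ we have $1-\alpha < \alpha$, so the origin lies strictly inside $S_\alpha$; hence every ray from the origin meets $S_\alpha$ in exactly one point, $\proj_0|_{S_\alpha}$ is a bijection onto $\Sn$, and therefore $p_\alpha(N,\cdot)$ is bijective. (At the endpoint $\alpha=1$ this is just the identity, as $S_1 = \Sn$.) For $\alpha = \fel$ the origin lies \emph{on} $S_{1/2}$, the single exceptional point $0 \in S_{1/2}$ being the image $A_{1/2}(-N)$ --- which is exactly why the domain must exclude $u=-N$. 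Here I would argue directly: a point $p \in S_{1/2}$ satisfies $\|p - \fel N\|^2 = \tfrac14$, equivalently $\|p\|^2 = \inner{p}{N}$. Hence for $p \neq 0$ its projection $z = p/\|p\|$ obeys $\inner{z}{N} = \|p\| > 0$, so the image lies in the open upper hemisphere; conversely, for each $z$ with $\inner{z}{N} > 0$ the only solution of $tz \in S_{1/2}$ with $t>0$ is $t = \inner{z}{N}$, giving a unique preimage. This proves injectivity and identifies the range as exactly $\lers{z \in \Sn \, : \, \inner{z}{N} > 0}$.

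The remaining case $\alpha \in \left(0,\fel\right)$ is the one I expect to be the main obstacle, because the origin now lies strictly \emph{outside} $S_\alpha$ and fibre sizes genuinely vary. For a unit vector $z$, the points $tz \in S_\alpha$ with $t>0$ are the positive roots of
\[
t^2 - 2(1-\alpha)\inner{z}{N}\, t + \ler{(1-\alpha)^2 - \alpha^2} = 0,
\]
whose constant term $(1-\alpha)^2 - \alpha^2 = 1-2\alpha$ is positive. The product of the roots is therefore positive, so the two roots share a sign; since their sum is $2(1-\alpha)\inner{z}{N}$, both are positive exactly when $\inner{z}{N}>0$ and the discriminant $4(1-\alpha)^2\inner{z}{N}^2 - 4(1-2\alpha)$ is nonnegative. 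I would conclude that the image is the proper spherical cap $\lers{z \in \Sn \, : \, \inner{z}{N} \ge \tfrac{\sqrt{1-2\alpha}}{1-\alpha}}$ (so $p_\alpha(N,\cdot)$ is not surjective), that $p_\alpha(N,\cdot)$ is $2$-to-$1$ over the interior of this cap, $1$-to-$1$ on its boundary circle (the tangent directions, where the discriminant vanishes), and has empty fibres outside; since the boundary circle is a null set, the map is $2$-to-$1$ almost everywhere on its image and in particular not injective. The care needed here --- tracking the sign of $t$ so as to count only the forward ray, and isolating the measure-zero tangent locus --- is the only genuinely delicate point; the other two cases follow immediately from the inside/on/outside trichotomy.
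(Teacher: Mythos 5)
Your proposal is correct and rests on exactly the same idea as the paper's proof: the image point $p_\alpha(N,u)$ is the radial projection of a point on the sphere $S_\alpha$ of radius $\alpha$ centred at $(1-\alpha)N$, and everything follows from whether the origin lies inside, on, or outside $S_\alpha$. The only difference is that the paper reduces to a planar picture and reads the conclusions off a figure, whereas you carry out the root-counting for $t^2-2(1-\alpha)\inner{z}{N}t+(1-2\alpha)=0$ explicitly (even identifying the image cap $\inner{z}{N}\ge\sqrt{1-2\alpha}/(1-\alpha)$ and the measure-zero tangency circle), which supplies the rigor the paper delegates to Figure \ref{figure:inj-surj}.
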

\begin{proof}
When considering $p_\alpha(N,u)$, we can assume without loss of generality that $N=(0,0, \dots, 0,1)$ and $u=(\cos \theta, 0, \dots, 0, \sin \theta)$ for some $\theta \in (-\pi, \pi]$.
    Let $c_u^{(\alpha)}$ be the normalising constant $\|(1-\alpha)N+\alpha u\|$. Note that $c_u^{(\alpha)}>0$ if and only if $(\alpha,u)\neq\left(\frac{1}{2},-N\right)$. Whenever $(\alpha,u)\neq\left(\frac{1}{2},-N\right)$, we have that
    \[
    c_u^{(\alpha)}p_u(N,u)=(\alpha\cos\theta, 0, \dots, 0,(1-\alpha)+\alpha\sin\theta),
    \]
    and so for a fixed $\alpha\neq\frac{1}{2}$, setting
    \begin{align*}
        x_\theta&:=\alpha\cos\theta\\
        y_\theta&:=(1-\alpha)+\alpha\sin\theta,
    \end{align*}
    we see that $(x_\theta,y_\theta)$ satisfy
    $x_\theta^2+(y_\theta-(1-\alpha))^2=\alpha^2$, i.e., they lie on the circle of radius $\alpha$ with the centre at $(0,1-\alpha)$.
    For any point $u=(\cos\theta,0,\dots, 0,\sin\theta)$, its image $p_\alpha(N,u)$ is the projection of $(x_\theta,0, \dots,0, y_\theta)$ onto $\mathbb{S}^n$, i.e., the point obtained as the intersection of $\mathbb{S}^n$ and the half-line from $(0,\dots,0)$ through $(x_\theta,0, \dots,0,y_\theta)$. Now the statements of this proposition are easy to see from Figure \ref{figure:inj-surj}.
  \begin{figure}[H]
    \centering
    \includegraphics[width=\textwidth]{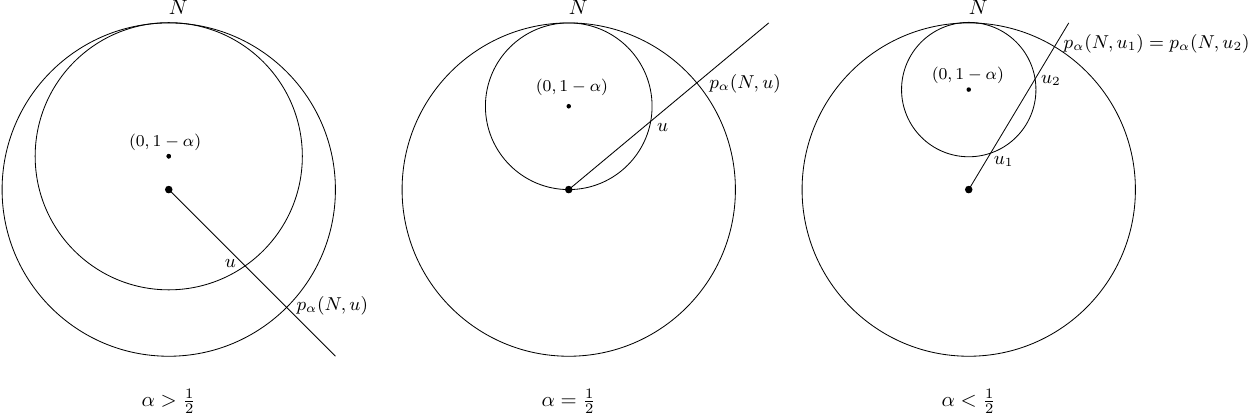}
    \caption{$p_\alpha(N,u)$ lies on the $\mathbb{S}^1$ spanned by $N$ and $u$, at the spherical projection of $c_u^{(\alpha)}p_\alpha(N,u)$. The bigger circle displayed in each of the cases is $\mathbb{S}^1$, the smaller one is $c_{\ \cdot}^{(\alpha)}p_\alpha(N,\cdot)$.}
    \label{figure:inj-surj}
\end{figure}
\end{proof}

Now we turn to the proof of Theorem \ref{thm:main} which, for the sake of clarity, we divide into six steps.\\

\noindent{\emph{Step 1.}} Similarly as in the proof of Theorem \ref{thm:circle}, we first understand the action of $\Psi$ on the set of Dirac measures. The maximal distance in $\mathcal{W}_2\left(\Sn,\|\cdot\|\right)$ is $2$, and is attained only on pairs of Dirac measures that are concentrated on antipodal points. Since
\[2=\dwt(\mu,\nu)=\dwt(\Psi(\mu),\Psi(\nu)),\] we get that $\Psi(\delta_x)$ is a Dirac measure for all $x\in\Sn$. Since $\Psi$ and $\Psi^{-1}$ are both isometries, the map $\psi\colon\Sn\to\Sn$ defined by $\Psi(\delta_x)=\delta_{\psi(x)}$ is a bijection, and furthermore, since
\[
\|\psi(x)-\psi(y)\|=d_{\mathcal{W}_2}\ler{\delta_{\psi(x)},\delta_{\psi(y)}}
=d_{\mathcal{W}_2}\ler{\Psi(\delta_x),\Psi(\delta_y)}
=d_{\mathcal{W}_2}\ler{\delta_x,\delta_y}=\|x-y\|,
\]
it is in fact an isometry.
Just as before, we will be done once we prove that an isometry of $\Wtsn$ which fixes all Dirac measures is necessarily the identity, because then in particular, $\ler{\psi^{-1}}_\#\circ\Psi=\id_{\Wtsn}$, and so $\Psi=\psi_\#$ as claimed. From now on, we assume that $\Phi$ is an isometry of $\Wtsn$ such that $\Phi(\delta_x)=\delta_x$, and our aim is to show that $\Phi(\mu)=\mu$ for all $\mu\in\Wtsn$.\\

\noindent{\emph{Step 2.}} Next we claim that $\Phi$ preserves the barycenter of measures.

For any $\mu\in\Wtsr$ and $x\in\Sn$, we have
\be \label{eq:dirac-dist-exp}
\dwt^2(\mu,\delta_x) = \int_{\Sn} \|y-x\|^2\dd\mu(y)=2-2\inner{x}{\int_{\Sn} y \, \dd\mu(y)}=2(1-\inner{x}{m(\mu)}).
\ee

This implies that
\[
2(1-\inner{x}{m(\mu)})=\dwt^2\ler{\mu,\delta_x}=\dwt^2\ler{\Phi(\mu),\Phi(\delta_x)}=\dwt^2\ler{\Phi(\mu),\delta_x}=2\ler{1-\inner{x}{m(\Phi(\mu))}}.
\]
But then
\[
\inner{x}{m(\Phi(\mu))}=\inner{x}{m(\mu)}
\]
and hence
\begin{equation}\label{eq:inner}
\inner{x}{m(\Phi(\mu))-m(\mu)}=0.
\end{equation}
Since for a fixed $\mu$, equation (\ref{eq:inner}) holds for every $x\in\Sn$, we conclude that $m\ler{\Phi\ler{\mu}} = m\ler{\mu}$.\\

\noindent{\emph{Step 3.}} Now we prove that measures supported on two points are mapped to measures supported on two points. We first show that for all $\mu,\nu\in\Wtsr$,

\[\affspan\ler{\supp\big(\Phi(\mu)\big)} \perp \affspan\ler{\supp\big(\Phi(\nu)\big)}\]
holds if and only if 
\[\affspan\ler{\supp(\mu)} \perp \affspan\ler{\supp(\nu)}.\]

Kloeckner proved in \cite[Lemma 6.2]{Kloeckner-2010} that orthogonality of supports can be characterized by the metric in the ambient space $\W_2\ler{\Rn, \|\cdot\|}$. Namely,
\begin{equation*}\label{eq:ort}
d_{\W_2(\Rn)}^2(\mu,\nu) = \|m(\mu)-m(\nu)\|^2+d_{\W_2(\Rn)}^2\left(\mu,\delta_{m(\mu)}\right)+d_{\W_2(\Rn)}^2\left(\nu,\delta_{m(\nu)}\right)
\end{equation*}
holds if and only if there exist two orthogonal affine subspaces $L,M \subset \Rn$ such that $\supp(\mu)\subseteq L$ and $\supp(\nu)\subseteq M$.
We proceed by showing that the isometries of $\Wtsn$ leave the $\W_2(\Rn)$-distance of a measure from the Dirac mass concentrated on its barycenter invariant, that is,
$$d_{\W_2(\Rn)}\ler{\mu,\delta_{m(\mu)}}=d_{\W_2(\Rn)}\ler{\Phi(\mu),\delta_{m(\Phi(\mu))}}$$ for any $\mu\in\Wtsn$. Indeed, a direct computation very similar to \eqref{eq:dirac-dist-exp} shows that
\[
d_{\W_2(\Rn)}^2\ler{\mu,\delta_{m(\mu)}}=1-\|m(\mu)\|^2 \text{ and } d_{\W_2(\Rn)}^2\ler{\Phi(\mu),\delta_{m(\Phi(\mu))}}=1-\|m(\Phi(\mu))\|^2,
\]
which implies our statement as we have shown $m(\Phi(\mu))=m(\mu)$ in Step 2. Hence for any $\mu, \nu\in\Wtsn$,
\begin{multline*}
\|m(\mu)-m(\nu)\|^2+\dwt^2\left(\mu,\delta_{m(\mu)}\right)+\dwt^2\left(\nu,\delta_{m(\nu)}\right)\\
=\|m(\Phi(\mu))-m(\Phi(\nu))\|^2+\dwt^2\left(\Phi(\mu),\delta_{m(\Phi(\mu))}\right)+\dwt^2\left(\Phi(\nu),\delta_{m(\Phi(\nu))}\right),
\end{multline*}
meaning that orthogonally supported measures must be mapped to orthogonally supported measures by $\Phi$. 

A maximal set of measures whose supports are one-dimensional and pairwise orthogonal must therefore be mapped to a set of measures whose supports are zero- or one-dimensional. But zero-dimensionally supported measures are exactly the Dirac masses, to which only Dirac masses can be mapped by $\Phi$, and so one-dimensionally supported measures must be mapped to one-dimensionally supported measures. Continuing similarly, we would see more generally that the affine dimension of the support is preserved by $\Phi$, but since on the sphere, one-dimensionally supported measures are exactly the two-point supported measures, the one-dimensional case is enough to prove our statement.\\

\noindent\emph{Step 4.} We proceed with showing that measures supported on two points are fixed by $\Phi$.
Let us introduce the notation $\Delta_{2}'(\Sn)$ for the set of all elements in $\Wtsr$ with a two-point support, set $\widetilde{\mu} := \ler{t_{-m(\mu)}}_\#\mu$ for all $\mu \in \Delta_{2}'(\Sn)$, and $\Delta_{2,0}'(\Sn) := \left\{\widetilde{\mu}\in\mathcal{P}(\Rn) : \mu\in \Delta_{2}'(\Sn) \right\}$. By Step 3, $\Phi|_{\Delta_{2}'(\Sn)}\colon \Delta_{2}'(\Sn)\to \Delta_{2}'(\Sn)$ is an isometric embedding. By Proposition \ref{prop:translation} we know that for all $\mu,\nu\in \Delta_{2}'(\Sn)$,
\begin{align*}
\dwt^2\ler{\widetilde{\mu},\widetilde{\nu}}
&= \dwt^2\ler{\mu,\nu} - \|m(\mu)-m(\nu)\|^2\\ 
&= \dwt^2\ler{\Phi(\mu),\Phi(\nu)} - \|m(\Phi(\mu))-m(\Phi(\nu))\|^2= \dwt^2\ler{\widetilde{\Phi(\mu)},\widetilde{\Phi(\nu)}}.
\end{align*}
Consequently, $\widetilde{\mu}=\widetilde{\nu}$ holds if and only if $\widetilde{\Phi(\mu)}=\widetilde{\Phi(\nu)}$, in other words, $\Phi(\nu)$ is a translate of $\Phi(\mu)$ if and only if $\nu$ is a translate of $\mu$. 

Let a measure $\mu\in\Delta_{2}'(\Sn)$ be fixed. We can assume without loss of generality that
\[
\supp(\mu)= \{(\cos\theta, 0, \dots,0, \sin \theta),(\cos\theta, 0, \dots,0, -\sin \theta)\}
\]
for some $\theta \in (0, \pi/2]$. In this case,
\[
\fel\sum_{x\in\supp(\mu)}x=(\cos \theta, 0, \dots, 0)
\]
and
\[
\ler{\affspan\ler{\supp(\mu)}-\fel\sum_{x\in\supp(\mu)}x}^\perp=\{(v_1,\dots,v_n,0) \colon v_1, \dots,v_n \in \R\}.
\]
 Define
 $\vec{\mu} :=\left\{v\in\R^{n+1} : (t_v)_\#\mu\in\mathcal{P}\ler{\Sn}\right\}$, and observe that $\vec{\mu}$ is the set of those vectors $(v_1,\dots,v_{n+1})$ such that
\begin{align*}\|(v_1+\cos \theta, v_2, \dots, v_n, v_{n+1}+\sin \theta)\|
=\|(v_1+\cos \theta, v_2, \dots, v_n, v_{n+1}-\sin \theta)\|=1.
\end{align*}
Then we get that $v\in\vec{\mu}$ if and only if
\[
(v_1+\cos\theta)^2+v_2^2+\dots+v_n^2+v_{n+1}^2\pm2v_{n+1}\sin\theta=1-\sin^2\theta.
\]
Since $\sin\theta\neq0$, this holds exactly when $v_{n+1}=0$ and $(v_1+\cos\theta)^2+v_2^2+\dots+v_n^2=\cos^2\theta$, i.e., the first $n$ coordinates span an $n-1$-dimensional sphere with radius $|\cos\theta|$ centered at $\ler{-\cos\theta,0,\dots,0}$, or they are just the singleton containing $0 \in \R^n$ in the case $\cos\theta=0$.
In other words,
\begin{align*}
\vec{\mu}&=-(\cos\theta,0,\dots,0)+|\cos\theta|\cdot\ler{\Sn\cap\lers{\ler{v_1,\dots,v_n,0} : v_1,\dots,v_n\in\R}}\\
&=-\ler{\fel\sum_{x\in\supp(\mu)}x} + \|\fel\sum_{x\in\supp(\mu)}x\|\cdot\ler{\Sn\cap\ler{\affspan\ler{\supp(\mu)}-\fel\sum_{x\in\supp(\mu)}x}^\perp}.
\end{align*}

As $\Phi$ maps the translates of $\mu$ to the translates of $\Phi(\mu)$, there is an $\eta \in \Delta_{2,0}'(\Sn)\subset\mathcal{P}\ler{\Rn}$ such that
\begin{equation}\label{eq:translates}
\Phi\ler{(t_v)_\#\mu} = (t_{v+m(\mu)})_\#\eta \qquad (v\in\vec{\mu}).
\end{equation}
We emphasize that $\eta$ does not depend on $v$.
It follows that $\vec{\mu}+m(\mu)+\supp(\eta) \subset \Sn$. But by plugging $v=0 \in \R^{n+1}$ to (\ref{eq:translates}), we get that $\supp\ler{\Phi(\mu)}=m(\mu)+\supp(\eta)$, and so the previous line becomes $\vec{\mu}+\supp\ler{\Phi(\mu)}\subset\Sn$. By the definition of $\vec{\Phi(\mu)}$, this means that $\vec{\mu}\subseteq\vec{\Phi(\mu)}$.

For any $\mu$ with $\diam(\supp(\mu))<2$, we have that $\cos\theta\neq0$, and so $\vec{\mu}$ is an $n-1$-dimensional sphere, implying that $\vec{\mu}=\vec{\Phi(\mu)}$ and $\supp(\mu)=\supp\ler{\Phi(\mu)}$. Now $\mu$ and $\Phi(\mu)$ are probability measures with the same 2-point support and the same barycenter, and so $\mu=\Phi(\mu)$. Finally, $\mu=\Phi(\mu)$ for all $\mu\in\Delta'_2\ler{\Sn}$ by continuity of $\Phi$.\\

\noindent\emph{Step 5.}  Now assume that $\mu=\sum_{i=1}^m\lambda_i\delta_{x_i}$ where $x_i\neq-x_j$ for all $1\leq i<j\leq m$. Such measures form a dense subset of $\Wtsn$. We claim that
\[\supp\big(\Phi(\mu)\big)\subseteq\{x_1,\dots,x_m\}\cup\{-x_1,\dots,-x_m\}\]
and $\big(\Phi(\mu)\big)(\{x_i,-x_i\})=\mu(\{x_i\})$ for all $1\leq i\leq m$.

The proof of this claim relies on preserving the mass of \emph{bisectors} which are defined as follows: for $u,v \in \Sn$, the corresponding bisector is
\[
B(u,v):=\left\{y\in\Sn : \|u-y\|=\|v-y\|\right\}\cong\mathbb{S}^{n-1}.
\]
To start, we apply Lemma 3.17 from \cite{HIL} with $E=\R^{n+1}$, $p=2$, $x\in\Sn$, $a=1$ and $b=-1$ to obtain that
\begin{multline*}
\mu\ler{B(x,-x)}=\max\lers{\alpha : d_{\mathcal{W}_2}\ler{\mu,\alpha\delta_x+(1-\alpha)\delta_{-x}}=m_\mu}\\
-\min\lers{\alpha : d_{\mathcal{W}_2}\ler{\mu,\alpha\delta_x+(1-\alpha)\delta_{-x}}=m_\mu},
\end{multline*}
where $m_\mu:=\min\lers{d_{\mathcal{W}_2}\ler{\mu,\alpha\delta_x+(1-\alpha)\delta_{-x}} : 0\leq\alpha\leq1}$
and $B(x,-x)\cong\mathbb{S}^{n-1}$ is the bisector between $x$ and $-x$, i.e., the set of all points equidistant from $x$ and $-x$. But since $\Phi\ler{\alpha\delta_x+(1-\alpha)\delta_{-x}}=\alpha\delta_x+(1-\alpha)\delta_{-x}$ for all $\alpha\in[0,1]$ by Steps 1 and 4, we get that $m_\mu=m_{\Phi(\mu)}$, and subsequently $\mu(B(x,-x))=\ler{\Phi(\mu)}(B(x,-x))$ for all $x\in\Sn$.
Since for every $x\in\Sn$, $B(x,-x)$ is an $n-1$-dimensional subsphere of $\Sn$, and every $n-1$-dimensional subsphere of $\Sn$ is of the form $B(x,-x)$ for some $x\in\Sn$, the previous sentence says that $\mu(S)=(\Phi(\mu))(S)$ for every subsphere $S$ of codimension 1.

For every $\widetilde{x}\in\{x_1,\dots,x_m\}=\supp(\mu)$, there exists a sequence $\left(S_j\right)_{j\in\mathbb{N}}$ of $n-1$-dimensional subspheres of $\Sn$ such that $S_j\cap\supp(\mu)=\{\widetilde{x}\}$ for every $j$, and the intersection of any $n$ subspheres is trivial, that is, $\bigcap_{k=1}^n S_{j_k}=\{\tilde{x}, -\tilde{x}\}$
for any choice of $j_1<j_2<\dots<j_n$. Therefore,
\[\mu(\{\widetilde{x}\})=\mu\ler{S_j}=\big(\Phi(\mu)\big)(S_j) \qquad \ler{j \in \N},\]
and we are in the right position to prove that 
\[
\Phi(\mu)\ler{\{\widetilde{x},-\widetilde{x}\}}=\mu\ler{\{\widetilde{x}\}}.
\]
The inequality $\Phi(\mu)\ler{\{\widetilde{x},-\widetilde{x}\}} \leq \big(\Phi(\mu)\big)(S_j)=\mu(\{\widetilde{x}\})$ holds because $\lers{\widetilde{x},-\widetilde{x}}\subseteq S_j$. If $n=1$, then in fact $\lers{\Tilde{x},-\Tilde{x}}=S_j$, and we are done. If $n\geq2$, assume
indirectly that $\Phi(\mu)\ler{\{\widetilde{x},-\widetilde{x}\}}<\mu\ler{\{\widetilde{x}\}}$, and let $\varepsilon>0$ denote the gap between the two sides of this strict inequality. Now we have
\[
\big(\Phi(\mu)\big)\ler{S_j\setminus \{\widetilde{x},-\widetilde{x}\}}
=\big(\Phi(\mu)\big)(S_j)-\Phi(\mu)\ler{\{\widetilde{x},-\widetilde{x}\}}=\varepsilon
\]
for every $j \in \N$. The fact that $\bigcap_{k=1}^n S_{j_k}=\{\tilde{x}, -\tilde{x}\}$
for every $j_1<j_2<\dots<j_n$ implies that the family of sets $\ler{S_j\setminus \{\widetilde{x},-\widetilde{x}\}}_{j=1}^{\infty}$ covers any point of $\Sn$ at most $n$ times. This means that $\sum_{j=1}^{\infty} \big(\Phi(\mu)\big)\ler{S_j\setminus \{\widetilde{x},-\widetilde{x}\}}$ is bounded from above by $n \cdot \big(\Phi(\mu)\big)(\Sn)=n$, which is a contradiction as $\big(\Phi(\mu)\big)\ler{S_j\setminus \{\widetilde{x},-\widetilde{x}\}}=\varepsilon$ for every $j \in \N$ and $\sum_{j=1}^{\infty} \varepsilon= \infty$.
\\

\noindent\emph{Step 6.} A crucial consequence of the claim made in Step 5 is that the isometry $\Phi$ fixes all measures that are supported within an open hemisphere of $\mathbb{S}^n$. Indeed, we learned from Step 2 that $\Phi$ preserves the barycenter of measures, and from Step 5 that the only possible action $\Phi$ can do is to send some mass from a point to its antipodal point. But if a measure is supported on an open hemisphere, then the transport of any mass to its antipodal point would change the barycenter.
\\
Suppose that $\mu\ler{\{-N\}}=0$. Then the spherical projection $\rho^{(\frac{1}{2})}_{\delta_N\otimes\mu}$ of the displacement convex combination of $\delta_N$ and $\mu$ is well-defined, and since it is supported on the upper hemisphere, $\Phi\ler{\rho^{(\frac{1}{2})}_{\delta_N\otimes\mu}}=\rho^{(\frac{1}{2})}_{\delta_N\otimes\mu}$.
Let us now consider the sets
\begin{align*}
        A:&=\lers{Q_{\frac{1}{2}}^{\delta_N,\mu}(\rho) : \rho\in\mathcal{P}\left(\Sn\right)}\\
        &=\lers{Q_{\frac{1}{2}}^{\Phi(\delta_N),\Phi(\mu)}\ler{\Phi(\rho)} : \rho\in\mathcal{P}\left(\Sn\right)}\\
        &=\lers{Q_{\frac{1}{2}}^{\delta_N,\Phi(\mu)}(\kappa) : \kappa\in\mathcal{P}\left(\Sn\right)}=:B
\end{align*}
where the last equality follows from the surjectivity of the isometry $\Phi$. Since $A=B$, necessarily
    \[
    \min B = \min A = Q_{\frac{1}{2}}^{\delta_N,\mu}\ler{\rho^{(\frac{1}{2})}_{\delta_N\otimes\mu}}
    = Q_{\frac{1}{2}}^{\Phi(\delta_N),\Phi(\mu)}\ler{\Phi\ler{\rho^{(\frac{1}{2})}_{\delta_N\otimes\mu}}}
    = Q_{\frac{1}{2}}^{\delta_N,\Phi(\mu)}\ler{\Phi\ler{\rho^{(\frac{1}{2})}_{\delta_N\otimes\mu}}}.
    \]
    The fact that $B$ has a unique minimizer implies by the second statement of Proposition \ref{prop:min-char} that $\Phi(\mu)(\{-N\})=0$.
    Consequently --- let us now use the first statement of Proposition \ref{prop:min-char} ---, the unique minimizer of $Q_\alpha^{\delta_N,\Phi(\mu)}$ is $\rho^{(\frac{1}{2})}_{\delta_N\otimes\Phi(\mu)}$, and hence
      \be \label{eq:fineq}      \rho^{(\frac{1}{2})}_{\delta_N\otimes\Phi(\mu)}=\Phi\ler{\rho^{(\frac{1}{2})}_{\delta_N\otimes\mu}}=\rho^{(\frac{1}{2})}_{\delta_N\otimes\mu}.
    \ee
     By injectivity of $p_{\frac{1}{2}}(N,\cdot)$ on $\mathbb{S}^n\setminus \{-N\}$, see Proposition \ref{prop:inj-surj}, for every measure $\nu\in\mathcal{P}\ler{\Sn}$ supported within the upper hemisphere, there is a unique measure $\kappa\in\mathcal{P}\ler{\Sn}$ such that $\nu$ is the spherical projection $\rho^{(\frac{1}{2})}_{\delta_N\otimes\kappa}$ of the displacement convex combination of $\delta_N$ and $\kappa$. Therefore, \eqref{eq:fineq} implies that $\Phi(\mu)=\mu$, which completes the proof.\\

\paragraph*{{\bf Acknowledgements.}} We are grateful to the anonymous referee for his/her insightful comments and suggestions. 

\end{document}